\newtheorem{theorem}{Theorem}[section]
\newtheorem{lemma}[theorem]{Lemma}
\newtheorem*{theorem*}{Theorem}
\theoremstyle{definition}
\newtheorem{definition}[theorem]{Definition}
\DeclareMathAlphabet{\mathpzc}{OT1}{pzc}{m}{it}
\DeclareMathOperator{\Hom}{\mathsf{Hom}}
\DeclareMathOperator{\Ext}{\mathsf{Ext}}
\DeclareMathOperator{\Sym}{\mathsf{Sym}}
\newcommand{\kO}{\mathcal{O}}
\newcommand{\kN}{\mathcal{N}}
\DeclareMathOperator{\cL}{\mathsf{L}}
\DeclareMathOperator{\cR}{\mathsf{R}}
\begin{document}
\title[Tangent bundle of a manifold of K$3^{[2]}$-type is rigid]{Tangent bundle of a manifold of K$3^{[2]}$-type is rigid}
\author{Volodymyr Gavran}
\email{vlgvrn@gmail.com}
\maketitle

\begin{abstract}
We prove that the tangent bundle of a manifold of K$3^{[2]}$-type is rigid.
\end{abstract}
\section{Introduction}
M.Verbitsky in the work \cite{verb} showed that for a hyperholomorphic vector bundle $F$ on a hyperk\"ahler manifold $X$ there are no obstructions for stable deformations of $F$ besides the Yoneda pairing on $\Ext^1(F, F)$. Moreover, he proved the existence of a canonical hyperk\"ahler structure on the reduction of the coarse moduli space of stable deformations of $F$. If $S$ is a K3 surface then it is known that the Hilbert scheme $S^{[n]}$ is a hyperk\"ahler manifold and the tangent bundle $T_{S^{[n]}}$ is a hyperholomorphic bundle on $S^{[n]}$. Thus, the investigation of the deformation space of the bundle $T_{S^{[n]}}$ is a very natural and interesting question from the point of view of hyperkahler geometry. This question also appeared in \cite{charles} in the context of the Lefschetz standard conjecture for hyperk\"ahler manifolds. It was mentioned there without a proof that for $n = 2$ the tangent bundle might actually be rigid. In the present note we confirm this statement by proving the following theorem.
\begin{theorem}
\label{main} 
Let $X$ be a manifold of K$3^{[2]}$-type. Then the tangent bundle $T_X$ is infinitesimally rigid, i.e. $H^1(X, {\mathcal End}(T_X))=0$. \end{theorem}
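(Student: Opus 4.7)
The natural starting point is the holomorphic symplectic form $\sigma\in H^0(X,\Omega^2_X)$, which yields the canonical isomorphism $T_X\cong\Omega^1_X$ and, as $\mathrm{Sp}(2)$-bundles, the decomposition
\[
{\mathcal End}(T_X)\ \cong\ \Omega^1_X\otimes\Omega^1_X\ \cong\ \Sym^2\Omega^1_X\ \oplus\ \Omega^2_{X,0}\ \oplus\ \kO_X,
\]
where $\Omega^2_{X,0}$ denotes the kernel of the contraction $\Omega^2_X\to\kO_X$ with the Poisson bivector $\pi=\sigma^{-1}$. Theorem~\ref{main} then splits into three independent vanishings: $H^1(X,\kO_X)=H^1(X,\Omega^2_{X,0})=H^1(X,\Sym^2\Omega^1_X)=0$.

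The first two are essentially book-keeping with Hodge numbers. One has $h^{0,1}(X)=0$ by simple-connectedness, and $h^{1,2}(X)=H^1(X,\Omega^2_X)=0$ follows from G\"ottsche's formula for the Betti numbers of $S^{[n]}$. The natural map $H^0(\Omega^2_X)\to H^0(\kO_X)$ sends $\sigma\mapsto\langle\pi,\sigma\rangle=2\neq 0$, and is therefore surjective, so the long exact sequence of $0\to\Omega^2_{X,0}\to\Omega^2_X\to\kO_X\to 0$ forces $H^1(X,\Omega^2_{X,0})=0$.

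The substantial step is $H^1(X,\Sym^2\Omega^1_X)=0$. My plan is to reduce it to a single numerical identity. Since $\Sym^2\Omega^1_X$ is an associated bundle to the irreducible nontrivial $\mathrm{Sp}(2)$-representation $\Sym^2 V\cong\mathfrak{sp}(V)$, the restriction of the hyperk\"ahler Levi--Civita connection makes it Hermite--Einstein with no nonzero parallel sections; hence $H^0(X,\Sym^2\Omega^1_X)=0$, and Serre duality (together with the triviality of $K_X$) then gives $H^4=0$ and $H^3\cong H^1$. Consequently
\[
\chi(X,\Sym^2\Omega^1_X)\ =\ h^2(X,\Sym^2\Omega^1_X)\,-\,2\,h^1(X,\Sym^2\Omega^1_X),
\]
where $\chi$ is computable from the topologically fixed Chern numbers $c_2^2$ and $c_4$ of K$3^{[2]}$-type by Hirzebruch--Riemann--Roch. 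It remains to compute $h^2(X,\Sym^2\Omega^1_X)$ and match it to $\chi$.

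The main obstacle is the evaluation of $h^2(X,\Sym^2\Omega^1_X)$, which is not itself a Hodge piece. I would specialize to $X=S^{[2]}$ for a K3 surface $S$ and exploit its presentation as the $\mathfrak{S}_2$-quotient of the blow-up $q\colon\widetilde{S\times S}\to S^{[2]}$ of $S\times S$ along the diagonal $\Delta\cong S$. Cohomology on $S^{[2]}$ lifts to $\mathfrak{S}_2$-invariant cohomology on $\widetilde{S\times S}$; after expressing $q^*\Omega^1_{S^{[2]}}$ inside $\Omega^1_{\widetilde{S\times S}}$ (with the appropriate exceptional twist), taking symmetric squares, and applying the blow-up exact sequence together with K\"unneth on $S\times S$, the computation reduces to cohomology of tensor powers of $\Omega^1_S$ on a K3, all of which is controlled by the Hodge diamond of $S$. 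The remaining difficulties are (i) propagating the vanishing from $S^{[2]}$ to the entire K$3^{[2]}$-type family---presumably by appealing to the LLV Lie algebra action on $H^*(X,\mathbb{C})$ or to Verbitsky's hyperholomorphic deformation theory, either of which should render $h^i(\Sym^2\Omega^1_X)$ deformation invariant---and (ii) organising the equivariant blow-up calculation cleanly enough that the numerical equality $h^2(\Sym^2\Omega^1_X)=\chi(\Sym^2\Omega^1_X)$ emerges. Since the analogue $h^1(S,{\mathcal End}(T_S))$ on a K3 surface is already large, the vanishing on K$3^{[2]}$-type is a numerical phenomenon specific to this dimension rather than a formal consequence of the hyperk\"ahler structure.
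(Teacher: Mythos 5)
Your reduction is set up correctly as far as it goes: the symplectic decomposition $\mathcal{E}nd(T_X)\cong\Sym^2\Omega^1_X\oplus\Omega^2_{X,0}\oplus\kO_X$ is valid, the vanishings $H^1(\kO_X)=0$ and $H^1(\Omega^2_{X,0})=0$ do follow from $h^{0,1}=h^{2,1}=0$ and the splitting of $\Omega^2_X\to\kO_X$ by $\sigma$, and the Hermite--Einstein argument for $H^0(\Sym^2\Omega^1_X)=0$ together with Serre duality correctly yields $\chi(\Sym^2\Omega^1_X)=h^2-2h^1$. The propagation from $S^{[2]}$ to all of $\mathsf{Comp}_X$ via Verbitsky's hyperholomorphic theory is also exactly what the paper does in Section 4. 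But the entire content of the theorem now sits in the single unproved identity $h^2(X,\Sym^2\Omega^1_X)=\chi(X,\Sym^2\Omega^1_X)$, and your plan for it is circular in difficulty: $h^2(\Sym^2\Omega^1)$ is not a Hodge number and is not accessible by Riemann--Roch, so it must be computed by hand on $Z=\mathsf{Bl}_\Delta(S\times S)$ --- and any equivariant blow-up calculation capable of producing $h^2$ would equally well produce $h^1$ directly, making the $\chi$-detour superfluous. Moreover the calculation itself is harder than you suggest: $q_2^*\Omega_{S^{[2]}}$ sits in $\Omega_Z$ with quotient $j_*\kO_E(-E)$, so $\Sym^2$ of this inclusion gives only a two-step filtration with torsion graded pieces supported on $E$, not a clean exact sequence, and one must also track the $\mathfrak{S}_2$-action on all the resulting cohomology. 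So the proposal identifies the right target but does not contain a proof of the one step that matters.

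For contrast, the paper avoids the $\mathrm{Sp}(2)$-decomposition entirely and instead uses the short exact sequence $0\to\Omega_{S^{[2]}}\to\Omega_S^{[2]}\otimes L\to\iota_*\kO_D\to0$ expressing the cotangent bundle through the tautological bundle $\Omega_S^{[2]}$. This reduces $\Ext^1(\Omega_{S^{[2]}},\Omega_{S^{[2]}})=0$ to the two vanishings $\Ext^1(\Omega_S^{[2]}\otimes L,\Omega_{S^{[2]}})=0$ and $\Ext^2(\iota_*\kO_D,\Omega_{S^{[2]}})=0$, each of which is then checked by adjunctions along $q_1$, $q_2$ and $\sigma$ on $Z$ plus Krug's computation $\Hom(\Omega_S^{[2]},\Omega_S^{[2]})\cong\mathbb{C}$. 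If you want to complete your route, you would essentially have to redo a computation of that type for $\Sym^2$, which is strictly more work than the paper's; I would recommend working with the tautological exact sequence for $\Omega_{S^{[2]}}$ itself rather than with its symmetric square.
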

The proof of this statement is given in sections 3 and 4. It follows from explicit computations in the case when $X$ is the second Hilbert scheme of a K3 surface and then from application of general results from the theory of hyperholomorphic bundles \cite{verb}. For $n>2$ the question about deformations of $T_{S^{[n]}}$ seems to be much more difficult due to more complicated geometry of the corresponding Hilbert scheme and thus should be considered separately.

\medskip
\noindent\textbf{Acknowledgements.} The author is grateful to Christopher Brav and Misha Verbitsky for helpful discussions and to Fran\c{c}ois Charles for suggestions.
\section{Hilbert square}
For a smooth projective surface $S$ the Hilbert scheme of length-2 subschemes of $S$ is denoted by $S^{[2]}$. Let $\Delta:S\hookrightarrow S\times S$ be the diagonal embedding, $p_1,p_1':S\times S\to S$ be the projections onto the first and the second component and $\sigma:Z \stackrel{\mathsf{def}}= \mathsf{Bl}_\Delta(S\times S)\to S\times S$ be the blowup of $S\times S$ in $\Delta$. The natural action of the symmetric group $\mathfrak{S}_2$ on $S\times S$ extends to an action on $Z$ and the Hilbert square $S^{[2]}$ is the quotient of $Z$ by this action. By $q_2$ we denote the corresponding quotient map $Z\to S^{[2]}$. Let $j:E\hookrightarrow Z$ be the exceptional divisor of $\sigma$. Recall that $E\cong\mathbb{P}(T_S)$ is a projective bundle over $S$ and we have the relative Euler exact sequence
\begin{equation}\label{eul}
0\to\Omega_{E/S}\to\pi^*\Omega_S(-1)\to\kO_E\to0.
\end{equation}
Also, by $\iota:D\hookrightarrow S^{[2]}$ we denote the isomorphic image of $E$ by $q_2$. The divisor $D$ is precisely the locus parametrizing non-reduced subschemes of $S$ of length two.

Put $q_1 := p_1\circ\sigma$ and $q_1' := p_1'\circ\sigma$. The following diagram depicts the relationship among all the natural maps between the varieties that we mentioned:
\begin{equation}
\label{equation:notationHilb2}
\begin{tikzcd}
& E\cong\mathbb{P}(T_S) \arrow[r, hook, "j"] \arrow[dl, swap, "\pi"] & Z \arrow[dl, swap, "\sigma"] \arrow[dr, "q_2"] \arrow[d, "q_1"]\\ 
S \arrow[r, hook, "\Delta"] & S\times S \arrow[d, "p_1'"] \arrow[r, "p_1"] & S & S^{[2]} & D. \arrow[l, hook', swap, "\iota"]\\
& S &
\end{tikzcd}
\end{equation}
Here $\pi:E\to S$ is the projective bunde map and the equality $\sigma\circ j = \Delta\circ\pi$ yields $q_1\circ j = \pi$.

Note that $Z$ is isomorphic to the universal closed subscheme $\mathcal{Z}: = \{(x, \xi)\, |\, x\in\mathsf{Supp}(\xi)\}$ in $S\times S^{[2]}$ and for any coherent sheaf $F$ over $S$ there is the incidence exact sequence
\begin{equation}\label{incidence}
0\to q_1^*F(-E)\to q_2^*F^{[2]}\to q_1'^{*}F\to0,
\end{equation}
where $F^{[2]}$ is the image of $F$ under the tautological functor $q_{2*}q_1^*:\mathsf{Coh}(S)\to\mathsf{Coh}(S^{[2]})$ (see \cite[p. 193]{lehn}).

Recall that $q_{2*}\kO_{Z}\cong\kO_{S^{[2]}}\oplus L^{-1}$, where the line bundle $L^{-1}$ is the eigenspace to the eigenvalue $-1$ of
the cover involution. Moreover, $L^{\otimes2}\cong\kO_{S^{[2]}}(D)$ and $q_2^* L\cong\kO_Z(E)$. Note that $q_{2*}j_*\kO_E\cong\iota_*\kO_D$ and $q_2^*\iota_*\kO_D\cong\kO_{2E}$.

There is an exact sequence
\begin{equation}
\label{pullbackCotangent}
0\to q_2^*\Omega_{S^{[2]}}(E)\to\Omega_Z(E)\to j_*\kO_E\to0
\end{equation}
and an isomorphism
\[\Omega_{S^{[2]}}\cong q_{2*}(\kN^\vee_{Z/S\times S^{[2]}}(E)).\]
The sequence \eqref{pullbackCotangent} implies that $\omega_{q_2}\cong\kO_Z(E)$, hence the right adjoint functor to $q_{2*}$ is $q_2^!(-)=q_2^*(-)\otimes\kO_Z(E)$. 

Now we write down the exact sequence defining the cotangent bundle on $S^{[2]}$. Putting left non-zero arrow of the sequence \eqref{pullbackCotangent} together with the conormal exact sequence of the embedding $Z\hookrightarrow S\times S^{[2]}$ twisted by $E$ into a commutative diagram
\begin{equation}
\label{cotangentHilb2}
\begin{tikzcd}
& 0 \arrow[d] \arrow[r] & q_2^*\Omega_{S^{[2]}}(E) \arrow[r, "\sim"] \arrow[d] & q_2^*\Omega_{S^{[2]}}(E) \arrow[d]\\
0 \arrow[r] & \kN^\vee_{Z/S\times S^{[2]}}(E) \arrow[r]    & q_1^*\Omega_S(E)\oplus q_2^*\Omega_{S^{[2]}}(E)   \arrow[r]      & \Omega_Z(E)
\end{tikzcd}
\end{equation}
and applying the snake lemma we obtain the exact sequence
\begin{equation}
\label{snake}
0 \longrightarrow \kN^\vee_{Z/S\times S^{[2]}}(E) \longrightarrow q_1^*\Omega_S(E)\longrightarrow j_*\kO_E\longrightarrow 0.
\end{equation}
After pushing forward \eqref{snake} along $q_2$ we obtain the exact sequence
\begin{equation}\label{cotangent}
0\to\Omega_{S^{[2]}}\to\Omega_S^{[2]}\otimes L\to\iota_*\kO_D\to0.
\end{equation}
\section{Computation for the Hilberst square of K3 surface}
From now on we assume that $S$ is a K3 surface. We fix some isomorphism $T_S\stackrel{\simeq}\longrightarrow\Omega_S$. The isomorphism $\omega_S\cong\kO_S$ yields $\omega_Z\cong\kO_Z(E)$. From the Euler sequence \eqref{eul} it follows that $\Omega_{E/S}\cong\kO_E(-2)$. From stability of $\Omega_S$ we have that $\Hom(\Omega_S, \Omega_S)\cong\mathbb{C}$. The latter implies that $H^0(S, \Sym^2(\Omega_S)) = 0$. Also, we will use the equality $H^0(S, \Omega_S) = 0$ which by \cite[Remark 3.19]{krug} and by stability of $\Omega_S$ implies that $\Hom(\Omega_S^{[2]}, \Omega_S^{[2]})\cong\mathbb{C}$.

To prove the Theorem \ref{main} in this case it is enough to show the following two equalities
\begin{equation}
\label{firstVanishing}
\Ext^1(\Omega_S^{[2]}\otimes L, \Omega_{S^{[2]}})=0,
\end{equation}
\begin{equation}
\label{secondVanishing}
\Ext^2(\iota_*\kO_D, \Omega_{S^{[2]}})=0.
\end{equation}

\begin{lemma}\label{usefulEqualities} The following equalities hold
\begin{enumerate}[(i)]
  \item $\Hom(q_1^*\Omega_S(E), j_*\kO_E) \cong \Hom(q_1^*\Omega_S(E), q_1^*\Omega_S(E)|_E) \cong \Hom(q_1^*\Omega_S(E)|_E, j_*\kO_E) \cong\mathbb{C},$
  \item $\Ext^1(\iota_*\kO_D, \iota_*\kO_D) = 0,$
  \item $\Hom(\Omega_S^{[2]}\otimes L, \iota_*\kO_D)\cong \mathbb{C},$
  \item $\Ext^2(\kO_{2E}, q_1^*\Omega_S) = 0,$
  \item $\Ext^k(q_2^*\Omega_S^{[2]}, q_1^*\Omega_S(-E))=0$ for $k=0,1$,
  \item $\Ext^k(q_2^*\Omega_S^{[2]}, j_*\Omega_{E/S}(-E))=0$ for $k=0,1$,
  \item $\Ext^2(\kO_{2E}, j_*\Omega_{E/S}) = 0$.
\end{enumerate}
\end{lemma}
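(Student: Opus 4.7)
The plan is to prove each of the seven vanishings by reducing the Ext group to a cohomology computation on $S$, where the K3 inputs $H^0(S,T_S) = H^0(S,\Omega_S) = H^1(S,\kO_S) = 0$, stability of $T_S$ (giving $H^0(S, \End(T_S)) = \mathbb{C}$), and the already noted $H^0(S, \Sym^2\Omega_S) = 0$ do the work. Three mechanisms recur. First, the adjunction $\Hom_Z(A, j_*B) = \Hom_E(j^*A, B)$ for $A$ locally free, together with $j^*\kO_Z(E) = \kO_E(-1)$ and $j^* \circ q_1 = \pi$, pulls the first argument of any Ext against $j_*(\cdot)$ down to $E$. Second, Grothendieck duality for the smooth divisor embedding $j$ gives $\RHom_Z(j_*A, F) = j_*(A^\vee \otimes j^*F \otimes \kO_E(-1))[-1]$ for $A$ locally free on $E$, handling Ext with $j_*$ in the first slot. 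Third, sheaves pulled back through $\sigma$ push down via $R\sigma_*\kO_Z = \kO_{S\times S}$ and $R\sigma_*\kO_Z(-E) = \kI_\Delta$, after which K\"unneth on $S\times S$ finishes the computation. Every reduction ends with Leray for $\pi\colon E \to S$ using the standard direct images $R^i\pi_*\kO_E(n)$.

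For (i), (iii) and (vi) I apply the first adjunction. In (i) each of the three Homs reduces to $H^0(S, \End(T_S))$ or $H^0(S, \End(\Omega_S))$, both equal to $\mathbb{C}$ by stability. In (iii) the surjection in \eqref{cotangent} supplies the lower bound, and the upper bound comes from the restricted incidence sequence obtained by pulling \eqref{incidence} through $j$,
\[0 \to \pi^*\Omega_S(1) \to j^*q_2^*\Omega_S^{[2]} \to \pi^*\Omega_S \to 0,\]
and applying $\Hom_E(-, \kO_E(1))$; this leaves $H^0(\End T_S) = \mathbb{C}$ on the left and $H^0(T_S) = 0$ on the right. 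Part (vi) works identically after noting $j_*\Omega_{E/S}(-E) = j_*\kO_E(-1)$. For (ii) I run the local-to-global Ext spectral sequence on $S^{[2]}$; the identification $\kN_{D/S^{[2]}} \cong \kO_E(-2)$ (from $q_2^*\kO_{S^{[2]}}(D) = \kO_Z(2E)$ under $q_2|_E\colon E \xrightarrow{\sim} D$) makes both $H^0(D, \kN_{D/S^{[2]}})$ and $H^1(D, \kO_D)$ vanish, the first by Leray and the second from $H^1(S,\kO_S) = 0$.

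For (iv) and (vii) the clean tool is the fiber triangle $\RHom(\kO_{2E}, F) \to F \to F(2E)$ coming from the Koszul resolution $0 \to \kO_Z(-2E) \to \kO_Z \to \kO_{2E} \to 0$, so that $\Ext^k(\kO_{2E}, F) = H^{k-1}(\mathrm{cone}(F \to F(2E)))$. In (vii) the sheaf $F = j_*\kO_E(-2)$ is supported on $E$, so $F \to F(2E) = j_*\kO_E(-4)$ is multiplication by a section of $\kO_Z(2E)$ that vanishes on $E$ and is therefore the zero map; the cone splits as $F[1] \oplus F(2E)$ and $\Ext^2 = H^2(E,\kO_E(-2)) \oplus H^1(E,\kO_E(-4))$, which vanishes via Leray together with $H^1(\kO_S) = 0$ and $H^0(\Sym^2\Omega_S) = 0$. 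In (iv), $F = q_1^*\Omega_S$ is locally free, so I filter $F(2E)/F$ using $0 \to j_*\kO_E(-1) \to \kO_Z(2E)/\kO_Z \to j_*\kO_E(-2) \to 0$; after tensoring with $F$, projection formula gives the two subquotients as $j_*\pi^*\Omega_S(-1)$ and $j_*\pi^*\Omega_S(-2)$, each with vanishing $H^0$ and $H^1$ (the first because $R\pi_*\kO_E(-1) = 0$, the second because $H^*(E, \pi^*\Omega_S(-2)) = H^{*-1}(S,\Omega_S)$, zero in the relevant range).

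Part (v) is the main obstacle, since the target $q_1^*\Omega_S(-E)$ is neither supported on $E$ nor a pullback from $S^{[2]}$. My plan is to apply $\Hom(-, q_1^*\Omega_S(-E))$ to the symmetric counterpart of \eqref{incidence} obtained by swapping $q_1$ and $q_1'$,
\[0 \to q_1'^*\Omega_S(-E) \to q_2^*\Omega_S^{[2]} \to q_1^*\Omega_S \to 0,\]
so that both outer terms push cleanly through $\sigma$. The two bracketing Ext groups become $H^*(S\times S, p_1^*\End(\Omega_S) \otimes \kI_\Delta)$ and $H^*(S\times S, p_1^*\Omega_S \otimes p_1'^*T_S)$; K\"unneth plus $H^0(T_S) = H^0(\Omega_S) = 0$ kills the latter in degrees $0$ and $1$, while for the former the long exact sequence of $0 \to \kI_\Delta \to \kO_{S\times S} \to \Delta_*\kO_S \to 0$ tensored with $p_1^*\End(\Omega_S)$, combined with K\"unneth (using $H^1(\kO_S) = 0$), shows that restriction to the diagonal is an isomorphism on $H^0$ and $H^1$, forcing the ideal cohomology to vanish there. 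Choosing the symmetric orientation is what makes both bracketing Ext groups vanish directly; the other orientation would leave connecting maps whose analysis is not immediate.
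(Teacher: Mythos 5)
Your proposal is correct: all seven parts go through, and the final reductions land on the same K3 inputs the paper uses. Several of your routes, however, are genuinely different from the paper's. For (v), the paper first converts $\Ext^k(q_2^*\Omega_S^{[2]}, q_1^*\Omega_S(-E))$ into $\Ext^k(q_1^*\Omega_S, q_2^*\Omega_S^{[2]})$ via the adjunction $q_{2*}\dashv q_2^!$ and then applies $\Hom(q_1^*\Omega_S,-)$ to the incidence sequence in the second slot; you instead resolve the first slot by the $\mathfrak{S}_2$-transposed incidence sequence. Both end in the same computations ($\cR\sigma_*\kO_Z(-E)=I_\Delta$, K\"unneth on $S\times S$ with $H^0(T_S)=H^0(\Omega_S)=H^1(S,\kO_S)=0$), and your remark that only the transposed orientation kills both bracketing terms outright is exactly the reason the paper performs the adjunction swap first. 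For (ii) you run local-to-global Ext on $S^{[2]}$ with $\kN_{D/S^{[2]}}\cong\kO_E(-2)$, where the paper transfers to $Z$ and uses $\cL j^*\kO_{2E}\cong\kO_E\oplus\kO_E(-2E)[1]$; for (iv) you avoid the paper's Serre-duality step by resolving $\kO_{2E}$ by its Koszul complex and filtering $\kO_{2E}(2E)$ — a clean, more elementary alternative — and your splitting in (vii) (multiplication by the section of $\kO_Z(2E)$ is zero on $\kO_E$-modules) recovers the same decomposition the paper obtains from $\cL j^*\kO_{2E}$. One caveat: your stated duality formula $\RHom_Z(j_*A,F)=j_*(A^\vee\otimes j^*F\otimes\kO_E(-1))[-1]$ requires the derived pullback $\cL j^*F$; applied verbatim with underived $j^*$ to $F=j_*\kO_E$ in the third isomorphism of (i) it would return $0$ rather than $\mathbb{C}$. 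That Hom should instead be computed from full faithfulness of $j_*$ on coherent sheaves (as the paper does), so your answer for (i) stands, but the auxiliary formula as written should be corrected.
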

\begin{proof}
All listed equalities are straightforward consequences of standard adjunctions, properties of the blow-up and projective bundle map $E\to S$, so we only give sketched proofs.

\noindent (i) We have
\begin{align*}\Hom(q_1^*\Omega_S(E), j_*\kO_E)&\cong\Hom(p_1^*\Omega_S, \sigma_*j_*\kO_E(-E))\\
&\cong\Hom(p_1^*\Omega_S, \Delta_*\pi_*\kO_E(1))\\
&\cong\Hom(p_1^*\Omega_S, \Delta_*T_S)\\
&\cong\Hom(\Omega_S, T_S)\\
&\cong\mathbb{C}.
\end{align*}

By the projection formula we have that $\Hom(q_1^*\Omega_S(E), q_1^*\Omega_S(E)|_E)\cong\Hom(\Omega_S, \Omega_S)\cong\mathbb{C}$. Finally, $\Hom(\pi^*\Omega_S(E), \kO_E)\cong\Hom(\Omega_S, T_S)\cong\mathbb{C}$. From the fact that $q_1\circ j = \pi$ and since the functor $j_*$ is fully faithful on the level of abelian categories, we get $\Hom(q_1^*\Omega_S(E)|_E, j_*\kO_E)\cong\mathbb{C}$.

\noindent (ii) Using that $\cL\!j^*\kO_{2E} \cong \kO_E\oplus\kO_E(-2E)[1]$ and $\cR\!\pi_*\kO_E(-2)\cong\omega^\vee_S[-1]$, by the adjunction and the projection formula we have
\begin{align*}\Ext^k(\iota_*\kO_D,\iota_*\kO_D)&\cong\Ext^k(\iota_*\kO_D, q_{2*}j_*\kO_E)\\
&\cong\Ext^k(q_2^*\iota_*\kO_D, j_*\kO_E)\\
&\cong\Ext^k(\kO_{2E}, j_*\kO_E)\\
&\cong\Ext^k(\kO_E, \kO_E)\oplus\Ext^{k-1}(\kO_E, \kO_E(2E))\\
&\cong H^k(S,\kO_S)\oplus H^{k-2}(S, \omega^\vee_S).
\end{align*}
Hence $\Ext^1(\iota_*\kO_D,\iota_*\kO_D) = 0$.

\noindent (iii) Since $\Hom(q_1^*\Omega_S, j_*\kO_E)= 0$ and $\Hom(q_1'^*\Omega_S, j_*\kO_E(-E))\cong\Hom(\Omega_S, T_S)\cong\mathbb{C}$, from the exact sequence \eqref{incidence} with $F=\Omega_S$ we have that $\Hom(q_2^*\Omega_S^{[2]}, j_*\kO_E(-E))\cong\mathbb{C}$. Then $\Hom(\Omega_S^{[2]}\otimes L, \iota_*\kO_D)\cong\Hom(q_2^*\Omega_S^{[2]}, j_*\kO_E(-E))\cong\mathbb{C}$. 

\noindent (iv) Applying $\sigma_*$ to the exact sequence
\begin{equation}
\label{exactSequenceO2E}
0\to j_*\kO_E\to\kO_{2E}(E)\to j_*\kO_{E}(E)\to0
\end{equation}
and using the equality $\cR\!\pi_*\kO_E(-1)=0$ we obtain that $\cR\!\sigma_*\kO_{2E}(E)\cong\kO_\Delta$. Thus $\Ext^2(q_1^*\Omega_S, \kO_{2E}(E))\cong H^2(S, T_S) = 0$. The assertion then follows from the Serre duality.

\noindent (v) Applying adjunctions $q_1^*\dashv q_{1*}$ and $q_{2*}\dashv q_2^!$ we obtain 
$\Ext^k(q_2^*\Omega_S^{[2]}, q_1^*\Omega_S(-E))\cong\Ext^k(q_1^*\Omega_S, q_2^*\Omega_S^{[2]})
$.
Since $\cR\!q_{1*}q_1'^{*}\Omega_S\cong H^1(S, \Omega_S)\otimes\kO_S[-1]$ we have that $\Ext^k(q_1^*\Omega_S,q_1'^*\Omega_S)=0$ for $k=0,1$. From the exact sequence
\[0\to I_\Delta\to\kO_{S\times S}\to\kO_\Delta\to0\]
and the condition $H^1(S, \kO_S) = 0$ we obtain $\cR\!p_{1*}I_\Delta=\kO_S[-2]$. This implies that $\Ext^k(q_1^*\Omega_S,q_1^*\Omega_S(-E))\cong\Ext^k(\Omega_S,\Omega_S\otimes\cR\!p_{1*}I_\Delta)=0$ for $k = 0,1$. Now, applying $\Hom(q_1^*\Omega_S, -)$ to the incidence exact sequence \eqref{incidence} with $F = \Omega_S$, we obtain the desired statement.

\noindent (vi) Applying $\sigma_*$ to the sequence \eqref{exactSequenceO2E} twisted by $E$, we obtain the isomorphism $\cR\!\sigma_*\kO_{2E}(2E)\cong\kO_\Delta[-1]$. Together with the isomorphism $\Omega_{E/S}\cong\kO_E(-2)$ it gives
\[\Ext^k(q_2^*\Omega_S^{[2]}, j_*\Omega_{E/S}(-E))\cong\Ext^k(\Omega_S^{[2]}, \iota_*\kO_D\otimes L)\]
\[\cong\Ext^k(q_1^*\Omega_S, \kO_{2E}(2E))\cong\Ext^{k - 1}(\Omega_S, \kO_S) = 0\]
for $k = 0, 1$.

\noindent (vii) We have that $\Ext^2(\kO_{2E}, j_*\Omega_{E/S})\cong H^1(S, \omega_S^\vee)\oplus H^0(S, \Sym^2(T_S)) = 0.$
\end{proof}
From Lemma \ref{usefulEqualities}(1) we have that the map $q_1^*\Omega_S(E)\to j_*\kO_E$ in the exact sequence \eqref{snake} factors as the composition of natural maps
\begin{equation}\label{composition}
q_1^*\Omega_S(E)\longrightarrow q_1^*\Omega_S(E)|_E\longrightarrow j_*\kO_E,
\end{equation}
where the second map is the pushforward along $j$ of the quotient map $\pi^*\Omega_S(-1)\to\kO_E$ in the Euler exact sequence. 

Consider the maps
\begin{equation}\label{firstMap}
\alpha_k:\Ext^k(\Omega_S^{[2]}, \Omega_S^{[2]})\longrightarrow\Ext^k(\Omega_S^{[2]}\otimes L, \iota_*\kO_D), \,\,\, k = 0, 1,
\end{equation}
\begin{equation}\label{secondMap}
\beta_2:\Ext^2(\iota_*\kO_D, \Omega_S^{[2]}\otimes L)\longrightarrow\Ext^2(\iota_*\kO_D,\iota_*\kO_D),
\end{equation}
coming from the exact sequence \eqref{cotangent}. By the adjunction $q_2^*\dashv q_{2*}$ and factorization \eqref{composition} the map $\alpha_k$ can be written as the composition
\begin{equation}\label{extifact}
\Ext^k(q_2^*\Omega_S^{[2]}, q_1^*\Omega_S)\to\Ext^k(q_2^*\Omega_S^{[2]}, q_1^*\Omega_S|_E)\to\Ext^k(q_2^*\Omega_S^{[2]}, j_*\kO_E(-E)).
\end{equation}
From assertions (v) and (vi) of Lemma \ref{usefulEqualities} it follows that both maps in \eqref{extifact} are injective, thus $\alpha_0$ and $\alpha_1$ are injective as well. Moreover, by Lemma \ref{usefulEqualities}(iii) we get that $\alpha_0$ is an isomorphism since it is a map between one-dimensional vector spaces. This implies the equality $\eqref{firstVanishing}$. 

Similarly, we now decompose $\beta_2$ as
\begin{equation}\label{ext2factor}
\Ext^2(\kO_{2E}, q_1^*\Omega_S(E))\to\Ext^2(\kO_{2E}, q_1^*\Omega_S(E)|_E)\to\Ext^2(\kO_{2E}, j_*\kO_E).
\end{equation}
Lemma \ref{usefulEqualities}(iv) implies the injectivity of the first map in \eqref{ext2factor}. The injectivity of the second map follows from Lemma \ref{usefulEqualities}(vii). This shows that $\beta_2$ is injective, which together with Lemma \ref{usefulEqualities}(ii) gives the vanishing \eqref{secondVanishing}. 

\section{General case}
Let $X$ be an irreducible holomorphic symplectic manifold and $\mathcal{H} = (I, J, K)$ be the corresponding hyperk\"ahler structure. For any triple $a,b,c\in\mathbb{R}$ such that $a^2 + b^2 + c^2 = 1$ the operator $L := aI + bJ + cK$ defines a complex structure on $X$. Such a complex structure $L$ is called \emph{induced by the hyperk\"ahler structure}. The space $Q_{\mathcal{H}}$ of all induced complex structures of $\mathcal{H}$ is isomorphic to $\mathbb{C}P^1$ and is called \emph{the twistor line} of $\mathcal{H}$. Denote by $\mathsf{Comp}_X$ the coarse moduli space of complex structures on $X$. Then for each hyperk\"ahler structure we have an embedding $Q_{\mathcal{H}}\subset\mathsf{Comp}_X$.
\begin{definition}
\emph{A twistor path} in $\mathsf{Comp}_X$ is a collection of consecutively intersecting twistor lines $Q_0,...,Q_n\subset\mathsf{Comp}_X$. Two points $I, I'\in\mathsf{Comp}_X$ are called \emph{equivalent} if there exists a twistor path $\gamma = Q_0,...,Q_n$ such that $I\in Q_0$ and $I'\in Q_n$. The path $\gamma$ is then called \emph{a connecting path} of $I$ and $I'$.
\end{definition}

\begin{theorem}\cite[Theorem 3.2]{verb2}\label{connectinPath}
Any two points $I, I'\in\mathsf{Comp}_X$ are equivalent. 
\end{theorem}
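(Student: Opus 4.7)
The plan is to translate the problem to the period domain, where twistor lines become concrete algebraic curves, and then combine the Calabi--Yau theorem with Verbitsky's global Torelli theorem to chain together a sequence of twistor lines.

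First, I would recall the period domain
\[
\mathcal{D} := \bigl\{[\sigma]\in\mathbb{P}(H^2(X,\mathbb{C})) \mid q(\sigma,\sigma)=0,\ q(\sigma,\bar\sigma)>0\bigr\},
\]
where $q$ is the Beauville--Bogomolov--Fujiki form, together with the period map $\mathcal{P}\colon\mathsf{Comp}_X\to\mathcal{D}$ sending a complex structure $I$ to the line spanned by its holomorphic symplectic form $\sigma_I$. For any hyperk\"ahler structure $\mathcal{H}=(I,J,K)$ the three K\"ahler forms $\omega_I,\omega_J,\omega_K$ span a positive-definite real $3$-plane $W_\mathcal{H}\subset H^2(X,\mathbb{R})$, and a direct computation identifies $\mathcal{P}(Q_\mathcal{H})$ with the smooth conic $\{q=0\}\cap\mathbb{P}(W_\mathcal{H}\otimes\mathbb{C})$ inside $\mathbb{P}(W_\mathcal{H}\otimes\mathbb{C})\cong\mathbb{C}P^2$. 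Hence two twistor lines meet in the period domain exactly when their positive $3$-planes share a nonzero $q$-isotropic complex direction.

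Next, I would invoke Yau's theorem: for any $I$ and any K\"ahler class $\kappa\in H^{1,1}(X,I)_\mathbb{R}$ there is a unique Ricci-flat representative, and together with $\sigma_I$ this produces a hyperk\"ahler structure $\mathcal{H}(I,\kappa)$ with $I\in Q_{\mathcal{H}(I,\kappa)}$. Thus twistor lines through $I$ sweep out the entire positive cone of $H^{1,1}(X,I)_\mathbb{R}$, and as $I$ varies, the positive $3$-planes realized by hyperk\"ahler structures form an open dense subset of the Grassmannian of positive oriented $3$-planes in $H^2(X,\mathbb{R})$.

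The core of the argument is then a connectivity statement. Since $H^2(X,\mathbb{R})$ has signature $(3,b_2-3)$ with $b_2\geq 3$, this Grassmannian is a connected Riemannian symmetric space. Given $I,I'\in\mathsf{Comp}_X$, pick K\"ahler classes on each and form the associated hyperk\"ahler structures $\mathcal{H},\mathcal{H}'$ with $3$-planes $W,W'$. I would choose a continuous path $W_t$ from $W$ to $W'$, subdivide it into finitely many steps $W=W_0,\ldots,W_n=W'$ so that consecutive $W_i,W_{i+1}$ share a positive-norm class, and after a generic perturbation arrange that each shared class is simultaneously K\"ahler for both adjacent hyperk\"ahler structures; this last property forces the corresponding twistor lines to intersect at an actual point of $\mathsf{Comp}_X$. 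The main obstacle, in my view, is precisely this lifting step: a coincidence of periods in $\mathcal{D}$ need not reflect a coincidence of complex structures in $\mathsf{Comp}_X$, because $\mathcal{P}$ identifies birational hyperk\"ahler models along the non-separated locus. Controlling this requires the full strength of Verbitsky's global Torelli theorem and a careful argument near the discriminant divisors, which is what keeps the proof nontrivial.
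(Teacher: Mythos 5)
First, note that the paper does not prove this statement at all: it is quoted verbatim from Verbitsky (\cite[Theorem 3.2]{verb2}) and used as a black box, so there is no internal proof to compare yours against. Judging your sketch on its own terms, it assembles the right raw material (the period domain, the identification of $\mathcal{P}(Q_{\mathcal{H}})$ with a conic in $\mathbb{P}(W_{\mathcal{H}}\otimes\mathbb{C})$, and Yau's theorem producing a twistor line through any pair consisting of a complex structure and a K\"ahler class), but the overall architecture creates exactly the difficulty you then concede you cannot close. Choosing an abstract path of positive $3$-planes in the Grassmannian and trying to lift it to $\mathsf{Comp}_X$ forces you to convert equalities of periods into equalities of complex structures, and your proposed fix --- the global Torelli theorem --- is both anachronistic (it postdates the cited theorem by some fifteen years) and logically suspect: Verbitsky's proof of global Torelli has twistor-path connectivity as one of its central inputs, so invoking it here risks circularity. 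The phrase ``after a generic perturbation arrange that each shared class is simultaneously K\"ahler for both adjacent hyperk\"ahler structures'' is also doing unjustified work: nothing in your sketch explains why a shared positive class of type $(1,1)$ for two different complex structures should be K\"ahler for both.

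The actual proof runs the construction in the opposite direction and thereby never meets the lifting problem. One does not pick $3$-planes first; one builds the chain forward inside $\mathsf{Comp}_X$: starting from an honest complex structure $L_i$ and an honest K\"ahler class on $(X,L_i)$, Yau's theorem produces a hyperk\"ahler structure $\mathcal{H}_i$ with $L_i\in Q_{\mathcal{H}_i}$, so consecutive twistor lines intersect at genuine points of $\mathsf{Comp}_X$ by construction. The substantive inputs are then: (a) complex structures $L$ with $H^{1,1}_L(X)\cap H^2(X,\mathbb{Z})=0$ are dense on every twistor line (a countability argument), and (b) for such generic $L$ the K\"ahler cone is the entire positive cone of $H^{1,1}_L(X,\mathbb{R})$, so that from a generic point every positive $3$-plane containing its period $2$-plane is reachable in a single step. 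The period-domain computation you outline is then used only to show that a bounded number of such moves connects any two generic $3$-planes, and an arbitrary $I$ is first moved to a generic point along one twistor line. I would restructure your write-up along these lines and drop the appeal to global Torelli entirely.
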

Now we recall the definition of a hyperholomorphic bundle over $X$. 
\begin{definition}
Let $F$ be a holomorphic vector bundle over $(X, L)$ with a Hermitian connection $\nabla$ on $F$. The connection $\nabla$ is called \emph{compatible with a holomorphic structure} if $\nabla_v(\xi) = 0$ for any holomorphic section $\xi\in F$ and any antiholomorphic tangent vector $v$. If there exists a holomorphic structure compatible with the given Hermitian connection $\nabla$, then this connection is called
\emph{integrable}. The connection $\nabla$ is called \emph{hyperholomorphic} if it is integrable for any complex structure induced by the hyperk\"ahler structure. Then $F$ is called a \emph{hyperholomorphic bundle}.
\end{definition}
For an induced complex structure $L$ denote by $H^*_L(X, F)$ the holomorphic cohomologies of $F$ with respect to $L$. We mention the following important property of hyperholomorphic bundles.
\begin{theorem}\cite[Corollary 8.1]{verb}
\label{dimensionCohomology}
Let $F$ be a hyperholomorphic vector bundle. Then for any $i\geqslant0$ the dimension of the space $H^i_L(X, \mathcal{E}nd(F))$ is independent of an induced complex structure $L$.
\end{theorem}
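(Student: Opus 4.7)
The plan is to pass from Dolbeault cohomology to the de Rham-type cohomology of the hyperholomorphic connection itself and to exploit the canonical $SU(2)$-action on differential forms that is special to hyperk\"ahler geometry. Writing $G := \mathcal{E}nd(F)$, the induced Hermitian connection on $G$ inherits hyperholomorphicity from that on $F$, so it suffices to prove the dimension invariance for an arbitrary hyperholomorphic bundle; I continue to denote it by $F$ below.

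First I would invoke the canonical action of $SU(2)\cong\mathrm{Sp}(1)$ on $\Lambda^*T^*X\otimes_{\mathbb{R}}\mathbb{C}$ coming from the quaternionic structure on $TX$: each induced complex structure $L\in Q_{\mathcal{H}}$ corresponds to a point of the twistor line, and any $g\in SU(2)$ with $gLg^{-1}=L'$ intertwines the $L$- and $L'$-Hodge decompositions, yielding a $\mathbb{C}$-linear isomorphism
\[
\Lambda^{0,i}_L T^*X\xrightarrow{\ \sim\ }\Lambda^{0,i}_{L'} T^*X.
\]
The second step is to extend this action to $F$-valued forms $\Lambda^*T^*X\otimes F$. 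Hyperholomorphicity of $\nabla$ is equivalent to the statement that its curvature is of type $(1,1)$ with respect to every induced complex structure, i.e.\ $SU(2)$-invariant; granted this, the covariant differential $d_\nabla$ (and hence the Laplacian $\Delta_\nabla:=d_\nabla d_\nabla^*+d_\nabla^* d_\nabla$) is $SU(2)$-equivariant.

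The concluding step is a Hodge-theoretic identification. The K\"ahler identities applied to the hermitian holomorphic bundle $(F,\bar\partial_L,\nabla)$ give $\Delta_\nabla=2\Delta_{\bar\partial_L}$ on each $\Lambda^{p,q}_L\otimes F$, so $H^i_L(X,F)$ is represented by the subspace of $\nabla$-harmonic forms lying in $\Lambda^{0,i}_L\otimes F$. Because $SU(2)$ preserves the space of $\nabla$-harmonic forms, the element $g$ above restricts to an isomorphism between the $L$- and $L'$-subspaces, giving the desired equality of dimensions.

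The main obstacle is the second step: constructing the $SU(2)$-action on $\Lambda^*T^*X\otimes F$ and verifying equivariance of $d_\nabla$. Without hyperholomorphicity, the curvature contribution appearing when one commutes $d_\nabla$ past the $SU(2)$-action destroys equivariance and the whole comparison breaks down. This equivariance encapsulates what truly distinguishes hyperholomorphic bundles from generic holomorphic ones and is precisely where the hypothesis of the theorem enters in an essential way.
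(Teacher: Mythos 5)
This statement is not proved in the paper at all: it is quoted verbatim from Verbitsky \cite[Corollary 8.1]{verb}, so the only comparison available is with the argument in that reference. Your strategy --- transport the problem to harmonic forms for the full covariant Laplacian and use the pointwise $SU(2)$-action, which maps $\Lambda^{p,q}_L$ to $\Lambda^{p,q}_{gLg^{-1}}$ and acts transitively on the twistor sphere --- is indeed the strategy of the cited proof. However, two steps are asserted in a form in which they are false or unjustified. First, $d_\nabla$ is \emph{not} $SU(2)$-equivariant, for the same reason that $d$ itself is not: the action is trivial on $\Lambda^0$ but has no invariant vectors in $\Lambda^1$, so $d$ applied to functions already fails to commute with it. Consequently the ``and hence the Laplacian is equivariant'' inference collapses. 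What is true, and what Verbitsky actually proves, is that the \emph{Laplacian} $\Delta_{d_\nabla}$ commutes with the $SU(2)$-action; this is the hyperk\"ahler analogue of the fact that on a K\"ahler manifold $\Delta_d$ preserves the $(p,q)$-decomposition even though $d$ does not, and it has to be derived from the K\"ahler identities taken with respect to all induced complex structures together with the $SU(2)$-invariance of the curvature.

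Second, the identity $\Delta_{d_\nabla}=2\Delta_{\bar\partial_L}$ is the bare K\"ahler identity only for the \emph{trivial} bundle. For a Hermitian holomorphic bundle the Bochner--Kodaira--Nakano identity gives $\Delta_{\bar\partial_L}-\Delta_{\partial_{\nabla,L}}=[i\Theta,\Lambda_L]$, so a curvature correction term appears and must be shown to be harmless on $(0,q)$-forms. This is exactly where hyperholomorphicity enters a second time: an $SU(2)$-invariant $2$-form $\eta$ satisfies $\Lambda_L\eta=\Lambda_{gLg^{-1}}\eta$ for all $g$, and since $\Lambda_{-L}=-\Lambda_L$ this forces $\Lambda_L\Theta=0$ for every induced $L$, which is what tames the correction term. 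Your closing remark correctly identifies that the hypothesis must enter in an essential way, but it locates the essential use in a statement (equivariance of $d_\nabla$) that is not true; the actual load-bearing facts are the equivariance of the Laplacian and the vanishing $\Lambda_L\Theta=0$, both consequences of the $SU(2)$-invariance of the curvature. With those two points repaired, the outline does reproduce the proof of the cited result.
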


Note that the tangent bundle $T_X$ equipped with the Levi-Civita connection is always hyperholomorphic (see \cite[Example 2.9(i)]{verb3}). By Theorem \ref{connectinPath}, for any deformation $X' = (X, I')$, $I'\in\mathsf{Comp}_X$ of $(X, I)$ there exists a twistor path $\gamma$ connecting $I'$ and $I$. Since $T_X$ is hyperholomorphic, the dimension of the cohomology space $H^1(X, \mathcal{E}nd(T_X))$ is constant along $\gamma$ by Theorem \ref{dimensionCohomology}. In the case when $X$ is a manifold of K3$^{[2]}$-type this dimension is equal to zero by the result of Section 3. This proves Theorem \ref{main}.
\section{References}
\renewcommand\refname{}

\end{document}